\newtheorem{claim}[equation]{Claim}
\newtheorem{definition}[equation]{\indent{\it Definition}\rm }
\newtheorem{lemma}[equation]{Lemma}
\newtheorem{theorem}[equation]{Theorem}
\newcommand{\C}{{\mathbb{C}}}
\newcommand{\A}{{\mathbb{A}}}
\newcommand{\ord}{{\mathrm{ord}}}
\renewcommand{\div}{{\mathop{\mathrm{div}}}}
\newcommand{\ric}{{\mathrm{Ric}\,}}
\newcommand{\St}{\mathrm{St}}
\newcommand{\supp}{\mathrm{Supp}\,}
\numberwithin{equation}{section}
\begin{document}

\title[Holomorphic curves from annuli into semi Abelian varieties]{Nevanlinna theory for holomorphic curves from annuli into semi Abelian varieties} 

\author[Si Duc Quang]{Si Duc Quang$^{1,2}$}
\newcommand{\acr}{\newline\indent}
\address{\llap{$^1$\,}Department of Mathematics\acr
Hanoi National University of Education\acr
136-Xuan Thuy - Cau Giay\acr
Hanoi\acr
VIETNAM}

\address{\llap{$^2$\,}Thang Long Instutute of Mathematics and Applied Sciences\acr
Nghiem Xuan Yem, Hoang Mai\acr
Hanoi\acr
VIETNAM}

\email{quangsd@hnue.edu.vn}

\thanks{This work was done during a stay of the author at Vietnam Institute for Advanced Study in Mathematics (VIASM). He would like to thank the institute for the support. }

\subjclass[2010]{Primary 32H30, 32A22; Secondary 30D35}
\keywords{Nevanlinna theory, meromorphic function, holomorphic curve, Abelian variety, semi Abelian variety.}

\begin{abstract} 
In this paper, we prove a lemma on logarithmic derivative for holomorphic curves from annuli into K\"{a}hler compact manifolds. As its application, a second main theorem for holomophic curves from annuli into semi abelian varieties intersecting with only one divisor is given.
\end{abstract}

\maketitle

\section{Introduction}

Let $f$ be an algebraically non-degenerate holomorphic curve from $\C$ into a semi-Abelian variety $M$ and let $D$ be an algebraic divisor on $M$. In 2002, J. Noguchi, J. Winkelmann and K. Yamanoi \cite{NWY} proved that there exist a good compactification $\overline{M}$ of $M$ and an integer $k_0=k_0(f, D)$ satisfying
\begin{align*}
T_f(r;c_1(\overline D))\le N^{[k_0]}(r, f^*D)+O(\log T_f(r; c_1(\overline{D})))+O(\log r)
\end{align*}
for all $r\in [0;+\infty)$ outside a finite Borel measure set. Here by $T_f(r;c_1(\overline D))$ and $N^{[k_0]}(r, f^*D)$ we denote the characteristic function of $f$ with respect to the line bundle $L(\overline{D})$ in $\overline{M}$ and the counting function of divisor $f^*D$ truncated to level $k_0$ (see Section $\S 2$ for the definitions).

Adapting the method of the above three authors and using the lemma on logarithmic derivative given by Noguchi \cite{Nog81}, recently Quang \cite{Q12} has generalized the above result to the case of holomorphic curves from punctured disc $\Delta^*=\{z\in\C\ :\ |z|\ge 1\}$ into a semi-Abelian variety $M$. Also in \cite{Q12}, as an application of his second main theorem,  Quang gave an alternative proof of Big Picard's theorem for algebraically non degenerate mappings $f: \Delta^* \to M \setminus D$.

In this paper, we will extend these above results to the case of holomorphic curves from annuli into semi-Abelian varieties. In order to establish the second main theorem, we firstly prove a lemma on logarithmic derivative for holomorphic curves from annuli into K\"{a}hle manifold. To state our results, we recall the following.

For $R_0>1$, we set the annulus
$$ A(R_0)=\left\{z\in\C\ :\ \frac{1}{R_0}<|z|<R_0\right\}.$$
Let $N$ be a compact K\"{a}hler manifold. Let $\mathcal M_N^{*}$ be the sheaf of germs of meromorphic functions on $N$ which do not identically vanish, and define a sheaf $\mathcal U^{1}_{N}$ by
$$\begin{array}{ccccc}
0\rightarrow \C^{*}\rightarrow &\mathcal M^{*}_{N}&\overset{d\log}{\rightarrow}&\mathcal U^{1}_{N} & \rightarrow 0.\\
&\gamma&\mapsto &d\log\gamma & \\
\end{array},$$
where $\C^*$ denotes the multiplicative group of non-zero complex numbers.

Our lemma on logarithmic derivative  is stated as follows.
\begin{lemma}\label{1.1}
Let $N$ be a compact K\"{a}hler manifold with K\"{a}hler metric $h$ and the associated form $\Omega$. Let $f:\A(R_0)\rightarrow N$ be a holomorphic curve from annulus $\A(R_0)\ (R_0>1)$ into $N$ and let $\omega\in H^{0}(N, \mathcal U^1_N)$. Setting $f^{*}\omega =\xi (z)dz$, we have
$$ \bigl\|\ m_0(r,\xi )\le O\left(\log^+ T_{0,f}(r, \Omega )\right)+O\left(\log^+\frac{1}{R_0-r}\right).$$
\end{lemma}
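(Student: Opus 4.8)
The plan is to follow the method of Noguchi \cite{Nog81}, reducing the estimate for the pulled-back logarithmic form to the lemma on logarithmic derivative for a single meromorphic function, and then to supply the annular version of that scalar lemma, where the finiteness of the outer radius $R_0$ is what produces the error term $O(\log^+\frac{1}{R_0-r})$.

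First I would localize $\omega$. From the defining exact sequence, $\omega\in H^0(N,\mathcal U^1_N)$ is a global $d$-closed logarithmic $1$-form: each point of $N$ has a neighborhood $U_\alpha$ on which $\omega=d\log\gamma_\alpha$ for some $\gamma_\alpha\in\mathcal M^*_N(U_\alpha)$, the overlap ambiguities lying in $\C^*$ and hence vanishing under $d\log$. Since $N$ is compact, finitely many such $U_\alpha$ cover $N$; fix a subordinate partition of unity $\{\rho_\alpha\}$. On $f^{-1}(U_\alpha)$ one has $f^*\omega=\frac{(\gamma_\alpha\circ f)'}{\gamma_\alpha\circ f}\,dz$, so that on this set $\xi$ coincides with the logarithmic derivative of $g_\alpha:=\gamma_\alpha\circ f$. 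As in Noguchi's reduction, the compact K\"ahler structure lets me take the $g_\alpha$ meromorphic on all of $\A(R_0)$; estimating $\log^+|\xi|$ on the two circles $|z|=r$ and $|z|=1/r$ by means of the $\rho_\alpha$ then yields
$$m_0(r,\xi)\le\sum_\alpha m_0\Bigl(r,\tfrac{g_\alpha'}{g_\alpha}\Bigr)+O(1).$$

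The heart of the matter is the scalar annular lemma on logarithmic derivative: for a meromorphic function $g$ on $\A(R_0)$,
$$\|\quad m_0\Bigl(r,\tfrac{g'}{g}\Bigr)\le O(\log^+T_{0,g}(r))+O\Bigl(\log^+\tfrac{1}{R_0-r}\Bigr)$$
outside a set of finite Borel measure. I would prove this as the annular counterpart of the classical estimate, starting from the annular Poisson--Jensen formula to express $\log|g|$, differentiating, and controlling the resulting boundary integrals by a calculus (Borel-type) lemma that bounds the derivative of the increasing function $T_{0,g}(r)$ in terms of $T_{0,g}(r)$ itself. Here the outer circle $|z|=R_0$ plays the role that $\infty$ plays over $\C$, and tracking its contribution is exactly what replaces the classical $O(\log r)$ by $O(\log^+\frac{1}{R_0-r})$. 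Finally, since $\Omega$ is a fixed K\"ahler form on the compact $N$ it dominates, up to a constant, the Fubini--Study pullbacks attached to the $\gamma_\alpha$; hence $T_{0,g_\alpha}(r)\le O(T_{0,f}(r,\Omega))+O(1)$ by the First Main Theorem. Substituting this into the scalar lemma, summing over the finitely many $\alpha$, and using $\log^+(x+y)\le\log^+x+\log^+y+O(1)$, gives the claimed bound.

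I expect the decisive difficulty to be the scalar annular lemma, and within it the calculus lemma: one must follow the dependence on the finite radius $R_0$ with enough precision to obtain the sharp boundary term $O(\log^+\frac{1}{R_0-r})$ rather than a cruder estimate, and to keep the exceptional set of $r$ of finite measure. By contrast, the localization of $\omega$ and the comparison of characteristic functions are essentially formal once the compactness of $N$ and Noguchi's reduction are in hand.
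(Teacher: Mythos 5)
Your reduction step contains a genuine gap, and it is exactly where the real difficulty of the lemma lives. From the exact sequence defining $\mathcal U^1_N$, the obstruction to writing $\omega=d\log\gamma$ with $\gamma$ a global meromorphic function lies in $H^1(N,\C^*)$ and is in general nonzero; locally you do have $\omega=d\log\gamma_\alpha$, but $g_\alpha:=\gamma_\alpha\circ f$ is then defined only on $f^{-1}(U_\alpha)$, and your assertion that ``the compact K\"ahler structure lets me take the $g_\alpha$ meromorphic on all of $\A(R_0)$'' is unjustified and false as stated: nothing extends $\gamma_\alpha$ meromorphically across $N$, and even the intrinsic candidate $G=\exp\int f^*\omega$ is only \emph{multiplicative} (single-valued up to a constant factor), both because of the $H^1(N,\C^*)$ monodromy and because $\A(R_0)$ itself is not simply connected. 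Consequently your key inequality $m_0(r,\xi)\le\sum_\alpha m_0(r,g_\alpha'/g_\alpha)+O(1)$ has no meaning (the circles $|z|=r$, $|z|=1/r$ cross many sets $f^{-1}(U_\alpha)$, on which $g_\alpha$ is undefined), and your scalar annular lemma --- which you propose to prove by the annular Poisson--Jensen formula, hence for genuinely single-valued meromorphic functions, as in Khrystiyanyn--Kondratyuk and Lund--Ye --- does not apply to the functions that actually arise. Your final comparison $T_{0,g_\alpha}(r)\le O(T_{0,f}(r,\Omega))+O(1)$ likewise presupposes that the $\gamma_\alpha$ are global maps to $\P^1$, which they are not.

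The paper circumvents all of this with a \emph{global} decomposition: by Weil's theorem on compact K\"ahler manifolds, $\omega=d\log\varphi+\omega_1$ where $\varphi$ is a single multiplicative meromorphic function \emph{on $N$} and $\omega_1$ is a holomorphic $1$-form. The two pieces then require genuinely different treatments, neither of which appears in your sketch. For $\xi_1$ with $f^*\omega_1=\xi_1dz$, compactness gives the pointwise bound $|\xi_1(z)|^2\le C\,s(z)$ where $f^*\Omega=s(z)\frac{i}{2\pi}dz\wedge d\bar z$, and concavity of $\log$ plus the calculus lemma (your Borel-type lemma, correctly anticipated) yields $m_0(r,\xi_1)\le 4\log^+T_{0,f}(r;\Omega)+4\log^+\frac{1}{R_0-r}+O(1)$; note the partition-of-unity route cannot see this estimate, since a holomorphic form is locally $d\log e^h$ but $e^h$ has no global meaning. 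For $\xi_2=(\varphi\circ f)'/(\varphi\circ f)$, the paper proves the logarithmic derivative lemma directly for \emph{multiplicative} meromorphic functions on $\A(R_0)$ by the Griffiths--King negative-curvature (Ahlfors--Schwarz) method --- precisely because Poisson--Jensen is unavailable in the multivalued setting, only $|\varphi\circ f|$ and the logarithmic derivative being well defined --- and then controls $T_0(r,\varphi\circ f)$ via the First Main Theorem through $T_{0,f}(r;c_1(\div^\infty(\varphi)))$ and the line-bundle comparison theorem, the latter contributing an additional $O(\log\frac{1}{R_0-r})$ term that your sketch also drops. To repair your argument you would have to replace the local patching by Weil's decomposition and upgrade your scalar lemma to the multiplicative class; as it stands, the central reduction does not go through.
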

Here, $m_0(r,\xi )$ denotes the proximitive function of $\xi$ (see Section $\S 2$ for the definition) and the notation $\|\ P$ means the assertion $P$ holds for all $r\in [1;R_0)$ outside a Borel subset $E$ with $\int\limits_{E}\frac{dt}{(R_0-r)^{\lambda +1}}\le +\infty$ for some positive number $\lambda$. We learn the technique of the proof of Lemma \ref{1.1} from \cite{Nog81}.

Our second main theorem in this paper is stated as follows.

\begin{theorem}\label{1.2}
Let $f:\A(R_0)\rightarrow M$ be an algebraically non-degenerate holomorphic curve into a semi-Abelian variety $M$ and let $D$ be a reduced divisor on $M$. Then there exist a smooth equivariant compactification $\overline M$ of $M$ independent of $f$ and a natural number $k_0$ such that
\begin{align*}
\bigl\|\ T_{0,f}(r; c_1(\overline D)) &= N_0^{[k_0]}(r, f^*D)+O\left (\log^+T_{0,f}(r; c_1(\overline D))+\log^+\frac{1}{R_0-r}\right).
\end{align*}
\end{theorem}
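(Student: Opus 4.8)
The plan is to adapt the Noguchi–Winkelmann–Yamanoi strategy for the second main theorem on semi-Abelian varieties to the annulus setting, using the new logarithmic derivative lemma (Lemma \ref{1.1}) as the key analytic input in place of the classical lemma on logarithmic derivative over $\C$. The overall architecture should mirror the proof in \cite{NWY} (and its punctured-disc adaptation in \cite{Q12}), with every error term of the form $O(\log^+ T_f(r))$ over $\C$ being replaced by $O(\log^+ T_{0,f}(r)) + O(\log^+ \tfrac{1}{R_0-r})$, the characteristic shape of estimates on the annulus $\A(R_0)$.

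I think the first step is to fix the geometric setup. Let me choose a smooth equivariant compactification $\overline{M}$ of the semi-Abelian variety $M$ together with the boundary data so that the divisor $D$ extends to $\overline{D}$ on $\overline{M}$; this compactification must be taken independent of $f$, which is exactly what the theory of jet differentials on semi-Abelian varieties provides. The core of the argument is to produce enough global logarithmic jet differentials vanishing along $D$ (or on an appropriate jet projectivization). Concretely, one considers the universal extension and the logarithmic jet bundle $J_k(\overline{M}, \overline{D})$; using the semi-Abelian group structure, one trivializes this bundle via invariant logarithmic forms $\omega_1,\dots,\omega_n \in H^0(\overline{M}, \mathcal U^1_{\overline M})$ (the translation-invariant logarithmic $1$-forms, which restrict to the generators of $H^0(N,\mathcal U^1_N)$ in Lemma \ref{1.1}). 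Pulling these back by $f$ and differentiating gives the jet lift $J_k(f)$, whose coordinates are universal polynomials in the $\xi_i$ and their derivatives, where $f^*\omega_i = \xi_i\, dz$.

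The second step — and I expect this to be the \emph{main obstacle} — is the filtration/Ahlfors–Schwarz argument that bounds $T_{0,f}(r; c_1(\overline D))$ by the counting function $N_0^{[k_0]}(r, f^*D)$ up to the admissible error. Here one must produce a jet differential $\sigma$ of weighted degree $k_0$ vanishing on $D$ and evaluate $\sigma(J_{k_0}(f))$. Its logarithmic derivative decomposes, after the trivialization above, into a sum of proximity-type quantities $m_0(r, \xi_i)$ and contributions from $\|J_{k_0}(f)\|$. Applying Lemma \ref{1.1} to each $\omega_i$ controls every $m_0(r,\xi_i)$ by $O(\log^+ T_{0,f}(r,\Omega)) + O(\log^+\tfrac{1}{R_0-r})$, and a tautological inequality on the annulus (the annulus analogue of the first main theorem relating $T_{0,f}$ of the jet-projectivization to $T_{0,f}(r;c_1(\overline D))$) closes the estimate. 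The delicate point is that the algebraic non-degeneracy of $f$ must be used — as in \cite{NWY} — to guarantee the existence of the required jet differential with the correct vanishing order and that the contribution of the boundary $\overline{M}\setminus M$ is absorbed into the counting function at the uniform truncation level $k_0$; verifying that these geometric constructions are insensitive to the domain (so the same $\overline{M}$ and $k_0$ work for the annulus) is where the argument is most likely to require care.

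Finally, I would assemble the pieces: combining the tautological inequality, the jet-differential vanishing estimate, and Lemma \ref{1.1} yields
\begin{align*}
T_{0,f}(r; c_1(\overline D)) \le N_0^{[k_0]}(r, f^*D) + O\!\left(\log^+ T_{0,f}(r; c_1(\overline D)) + \log^+\tfrac{1}{R_0-r}\right)
\end{align*}
outside an exceptional set of the stated type. The reverse inequality $N_0^{[k_0]}(r, f^*D) \le T_{0,f}(r; c_1(\overline D)) + O(1)$ is the first main theorem on the annulus, so the two combine into the claimed equality. Throughout, the bookkeeping of the exceptional set must respect the $\int_E \tfrac{dt}{(R_0-r)^{\lambda+1}} < +\infty$ convention, which is compatible with the calculus lemma underlying Lemma \ref{1.1}; I would simply track that each application of Borel's growth lemma contributes an exceptional set of this form and that finite unions preserve it.
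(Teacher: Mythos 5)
Your outer scaffolding matches the paper's route: reduce to the Noguchi--Winkelmann--Yamanoi/Quang scheme, trivialize the logarithmic jet bundle by invariant logarithmic forms $\omega^i$, control $m_0(r,\xi^i)$ by Lemma \ref{1.1}, obtain the reverse inequality from the first main theorem on the annulus, and track exceptional sets in the $\Delta_{R_0}$ sense. But two concrete ingredients of the actual proof are missing. First, the reduction step: the paper first passes to $M/\St(D)$ so as to assume $D$ irreducible with $\St(D)=\{0\}$, and only then invokes Lemma \ref{10.1} to get a compactification $\overline M$ (independent of $f$) in which $\overline D$ is \emph{big and generally positioned}; bigness is what makes Theorem \ref{3.2} applicable in \eqref{9.5}, and general position is what the jet-space argument below needs. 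Your sketch asserts the existence of a good $\overline M$ but supplies neither the stabilizer reduction nor these two properties.

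Second, and more seriously, the central mechanism is misidentified. You attribute the inequality $T_{0,f}(r;c_1(\overline D))\le N_0^{[k_0]}(r,f^*D)+\cdots$ to a ``filtration/Ahlfors--Schwarz argument'' with a jet differential vanishing on $D$ closed by a ``tautological inequality.'' On a semi-Abelian variety the logarithmic cotangent bundle is trivial, so the negatively twisted jet differentials such a curvature argument requires do not exist, and in any case Ahlfors--Schwarz-type arguments do not produce a \emph{truncated} counting function. What the paper actually uses is Claim \ref{new4.53} (the annulus transplant of \cite[Claim 6.1]{Q12}): for some $k_0$, $\pi_2\big(J_{k_0}(\overline D,\log\partial M)\big)$ meets $\pi_2\big(\overline{J_{k_0}f(\A(R_0))}^{\mathrm{Zar}}\big)$ in a proper subset --- this is where algebraic non-degeneracy and general position of $\overline D$ enter, and it is what produces $k_0$. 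From it one gets a polynomial $R(w)$ in the jet-part coordinates, not vanishing identically along the jet lift, and the local identity \eqref{new4.31}, $b_{\alpha 0}\sigma_\alpha+b_{\alpha 1}d\sigma_\alpha+\cdots+b_{\alpha k_0}d^{k_0}\sigma_\alpha=R(w)$. This one identity does double duty: dividing by $\sigma_\alpha$ and applying Lemma \ref{1.1} with \eqref{9.5} yields the proximity bound \eqref{4.4}, hence $T_{0,f}(r;c_1(\overline D))\le N_0(r,f^*D)+{}$error; and the equivalence $\ord_zf^*D>k\Leftrightarrow J_k(f)(z)\in J_k(\overline D,\log\partial M)$ converts it into the excess-multiplicity bound $N_0(r,f^*D)-N_0^{[k_0]}(r,f^*D)\le N_0\big(r,\div_0 R(\xi_{(1)},\dots,\xi_{(k_0)})\big)\le T_0(r,R(\xi))=O\big(\log^+T_{0,f}(r;c_1(\overline D))\big)+O\big(\log^+\tfrac{1}{R_0-r}\big)$, which is precisely where the truncation comes from. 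Your proposal never derives $k_0$ nor this excess bound --- ``absorbed into the counting function at the uniform truncation level $k_0$'' is asserted, not proved --- so as written the argument has a genuine gap exactly at the theorem's core.
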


The basic notation of this paper is from \cite{KK05a,KK05b,LY10,Nog81,NWY} and \cite{Q12}.

\section{Basic Notion from Nevanlinna theory and Semi Abelian varieties}

\vskip0.2cm
\noindent
\textbf{(a)} Meromorphic functions on annuli.

Let $R_0>1$ and let $\A(R_0)$ be an annulus. For a divisor $\nu$ on $\A (R_0)$, which we may regard as a function on  $\A (R_0)$ with values in $\mathbb Z$ whose support is a discrete subset of $\A (R_0),$ and  for a positive integer $k$ (maybe $k= + \infty$), the \textit{counting function} of $\nu$ is defined by
\begin{align*}
n_0^{[k]}(t)&=\begin{cases}
\sum\limits_{1\le |z|\le t}\min\{k,\nu (z)\}&\text{ if }1\le t<R_0\\
\sum\limits_{t\le |z|<1}\min\{k,\nu (z)\}&\text{ if }\dfrac{1}{R_0}<t< 1
\end{cases}\\
 \text{ and }N_0^{[k]}(r,\nu)&=\int\limits_{\frac{1}{r}}^1 \dfrac {n_0^{[k]}(t)}{t}dt +\int\limits_1^r \dfrac {n_0^{[k]}(t)}{t}dt \quad (1<r<R_0).
\end{align*}

Let $f$ be a meromorphic function on $\A(R_0)$. We define the \textit{proximity function} by
$$ m_0(r,f)=\dfrac{1}{2\pi}\int\limits_{0}^{2\pi}\log^+\left|f\left(\dfrac{1}{r}e^{i\theta}\right)\right| d\theta +\dfrac{1}{2\pi}\int\limits_{0}^{2\pi}\log^+|f(re^{i\theta})| d\theta- \dfrac{1}{\pi}\int\limits_{0}^{2\pi}\log^+|f(e^{i\theta})| d\theta. $$
The \textit{characteristic function} of $f$ is defined by
$$ T_0(r,f)=m_0(r,f)+N_0(r,\nu^\infty_f). $$
We note that these definition also available for multiplicative meromorphic functions.

The function $f$ is said to be admissible if it satisfies
$$ \underset{r\longrightarrow R_0^-}{\mathrm{limsup}}\dfrac{T_0(r,f)}{-\log (R_0-r)}=+\infty.$$
Throughout this paper, a Borel subset $E$ of $[1;R_0)$ is said to be an $\Delta_{R_0}$-set if it satisfies
$$\int_{E}\dfrac{dr}{(R_0-r)^{\lambda +1}} <+\infty$$ 
for some $\lambda \ge 0$. 

\vskip0.2cm
\noindent
\textbf{(b)} Holomorphic curves from annuli into K\"{a}hler compact manifolds.

Let $\xi$ be a function on $\A(R_0)$ satisfying that

\hskip0.3cm (i) $\xi$ is differentiable outside a discrete set of points,

\hskip0.3cm (ii) $\xi$ is locally written as a difference of two subharmonic  functions.

\noindent
 Then by \cite[$\S 1$]{Nog81}, we easily have
\begin{align}\label{2.1}
\begin{split}
\int_{1}^{t}\dfrac{dt}{t}\int_{\A(t)}dd^c\xi
 =&\dfrac{1}{4\pi}\int_{|z|=r}\xi (z)d\theta +\dfrac{1}{4\pi}\int_{|z|=\frac{1}{r}}\xi (z)d\theta \\
& -\dfrac{1}{2\pi}\int_{|z|=1}\xi (re^{i\theta})d\theta -2(\log
 r)\int_{|z|=1}d^c\xi,
\end{split}
 \end{align}
where $dd^c\xi$ is taken in the sense of current.

Let $N$ be a K\"{a}hler compact manifold with K\"{a}hler metric $h$ and the associated form $\Omega$.
Let $f:\A(R_0)\rightarrow N$ be a holomorphic curve. The {\it characteristic function} of $f$ with respect to $\Omega$ is defined by
\begin{align}\label{2.2}
T_{0,f}(r;\Omega)=\int_{1}^{r}\dfrac{dt}{t}\int_{\A(t)}f^*\Omega, \ \ 1<r<R_0.
\end{align} 

Let $D$ be an effective divisor on $N$. We assume that $f(\A(R_0))\not\subset D$. We denote $L(D)$ the line bundle determined by $D$. We fix a Hermitian fiber metric $\|\cdot \|$ in $L(D)$ with the curvature form $\omega$ representing the first Chern class $c_1(D)$ of $L(D)$. Take $\sigma\in H^0(N,L(D))$ with
$\div (\sigma)=D$ and $\|\sigma \|\le 1$ (by the compactness of $N$). We set
\begin{align}\label{2.3}
T_{0,f}(r; c_1(D)):=T_{0,f}(r;\omega )=\int_{1}^{r}\dfrac{dt}{t}\int_{\A(t)}f^*\omega, \ \ 1<r<R_0,
\end{align}
which is well-defined up to an $O(1)$-term. 
The {\it proximity function} of $f$ with respect to $D$ is
defined by
\begin{align}\label{2.4}
\begin{split}
m_{0,f}(r;D)=&\dfrac{1}{2\pi}\int_{|z|=r}\log\dfrac{1}{\|\sigma (f(z))\|}d\theta +\dfrac{1}{2\pi}\int_{|z|=\frac{1}{r}}\log\dfrac{1}{\|\sigma (f(z))\|}d\theta\\
&-\dfrac{1}{\pi}\int_{|z|=1}\log\dfrac{1}{\|\sigma (f(z))\|}d\theta.
\end{split}
\end{align}
Applying (\ref{2.1}) to $\xi=f^*\log\|\sigma \|$, 
we obtain the {\bf First Main Theorem}:
\begin{align}\label{2.5}
T_{0,f}(r; c_1(D))=N_0(r,f^*D) &+m_{0,f}(r;D)+O(1).
\end{align}


\noindent
\textbf{(c)} Semi Abelian varieties and Logarithmic Jet bundle

Let $M_0$ be an Abelian variety and let $M$ be a complex Lie group admitting the exact sequence 
\begin{align}\label{2.6}
0\rightarrow\ (\C^*)^p\rightarrow M\overset{\eta}{\rightarrow} M_0\rightarrow 0,
\end{align}
where $\C^*$ is the multiplicative group of non zero complex numbers.
Such an $M$ is called a \textit{semi Abelian variety}.

Taking the universal covering of (\ref{2.6}), one gets
$$\begin{array}{ccccccccc}
0&\rightarrow & \C^p&\rightarrow &\C^n&\rightarrow &\C^m&\rightarrow &0\\
 &&\downarrow &&\downarrow &&\downarrow &&\\
0&\rightarrow &(\C^*)^p &\rightarrow &M&\rightarrow &M_0&\rightarrow &0,\\
\end{array}$$
\noindent
and an additive discrete subgroup $\Lambda$ of $\C^n$ such that
\begin{align*}
\pi :\C^n&\rightarrow M=\C^n/\Lambda, \\
\pi_0:\C^m=(\C^n/\C^p)&\rightarrow M_0 =(\C^n/\C^p)/(\Lambda /\C^p), \\
(\C^*)^p&=\C^p/(\Lambda\cap\C^p).
\end{align*}

Take a smooth equivariant compactification $\overline M$ of $M$. Then the boundary divisor $\partial M$ has only simple normal crossings.
Denote by $\Omega^1_{\overline{M}}(\log \partial M)$ the sheaf of germs of logarithmic $1$-forms over $\overline{M}$.
We take a basis $\{\omega^j\}_{j=1}^n$ of $H^0(\overline{M}, \Omega^1_{\overline{M}}(\log \partial M))$, which are $d$-closed, invariant with respect to the action of $M$, and $(\omega^1 \wedge \cdots \wedge \omega^n)(x)\not=0$ at all $x \in M$.
By the pairing $\{\omega^j\}_{j=1}^n$, we get the
 following trivialization of the logarithmic tangent bundle:
\begin{equation}\label{2.7}
T(\overline M,\log \partial M) \cong \overline{M} \times \C^n.
\end{equation}
Moreover, we have the logarithmic $k$-jet bundle $J_k(\overline M, \log \partial M)$ over $\overline M$ and a natural morphism
$$ \psi: J_k(\overline M, \log \partial M) \to J_k(\overline{M}).$$
The trivialization \eqref{2.7} gives
$$J_k(\overline M, \log \partial M)\cong\overline M\times \C^{nk}.$$
Let
\begin{align*}
\pi_1 &:J_k(\overline M, \log \partial M)\cong\overline M\times \C^{nk}\rightarrow\overline M,\\
\pi_2 &:J_k(\overline M, \log \partial M)\cong\overline M\times \C^{nk}\rightarrow\C^{nk}
\end{align*}
be the projections.
 For a $k$-jet $y\in J_k(\overline M, \log \partial M)$
 we call $\pi_2(y)$ the jet part of $y$.

Let $x\in\overline D$ and let $\sigma =0$ be a local defining equation of $\overline D$ around $x$. For a germ $g:(\C ,0)\rightarrow (M,0)$ of holomorphic mappings, we denote its $k$-jet by $j_k(g)$, and write
$$d^j\sigma (g)={\dfrac{d^j}{d\xi^j}\biggl |}_{\xi =0}\sigma (g(\xi )).$$
We set
\begin{align*}
&J_k(\overline D)_x = \{j_k(g)\in J_k(\overline M)_x \ |\ d^\sigma (g)=0,\ 1\le j\le k\},\\
&J_k(\overline D) =\cup_{x\in\overline D}J_k(\overline D)_x,\\
&J_k(\overline D, \log \partial M) =\psi^{-1}J_k(\overline D).
 \end{align*}
 $J_k(\overline M, \log \partial M)$, which is depending in general on
 the embedding $\overline D\hookrightarrow \overline M$(cf.\
 \cite{Nog86}).
 Note that $\pi_2\big (J_k(\overline D, \log \partial M)\big )$ is an
 algebraic subset of $\C^{nk}$, since $\pi_2$ is proper.

\vskip0.2cm
\noindent
\textbf{(d)} Divisor of semi Abelian variety in general position

Let $M$ be the semi-Abelian variety as above and let $X$ be a complex algebraic variety, on which $M$ acts:
$$(a,x)\in M\times X\rightarrow a \cdot x\in X$$
Let $Y$ be a subvariety embedded into a Zariski open subset of $X$.

\begin{definition}[{see \cite[Definition 3.2]{NWY}}]
We say that $Y$ is generally positioned in $X$ if the closure $\overline Y$ of $Y$ in $X$ contains no $M$-orbit. 
If the support of a divisor $E$ on a Zariski open subset of $X$ is generally positioned in $X$, then $E$ is said to be generally positioned in $X$.
\end{definition}

\begin{definition}
 Let $Z$ be a subset of $M$. We define the {\it stabilizer} of $Z$ by
$$\mathrm{St}(Z)=\{x\in M|x+Z=Z\}^0,$$
 where $\{\cdot\}^0$ denotes the identity component.
\end{definition}

\section{Proof of Lemma on logarithmic derivative}

In this section, we will give the proof for Lemma \ref{1.1}. 


The following is a general property of the characteristic function ( see \cite[Lemma 6.1.5]{NO} for reference).

\begin{theorem}[{see \cite[Lemma 6.1.5]{NO}}]\label{3.2}
Let $f : \A(R_0) \rightarrow V$  be a holomorphic curve into a complex projective 
manifold $V$ and let $H$ be a big line bundle on $V$. 
Then
$$T_{0,f}(r; c_1(L))=O(T_{0,f}(r;c_1(H)))+O\left(\log \frac{1}{R_0-r}\right),$$
for every line bundle $L$ on $V$.
\end{theorem}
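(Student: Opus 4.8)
The plan is to reduce the comparison, by elementary linear algebra in $\Pic(V)$, to two building blocks: the nonnegativity of $T_{0,f}(r;c_1(P))$ for ample $P$, and a one-sided bound $T_{0,f}(r;c_1(E))\ge -O(1)$ for effective $E$. First I would record the additivity $T_{0,f}(r;c_1(L\otimes L'))=T_{0,f}(r;c_1(L))+T_{0,f}(r;c_1(L'))+O(1)$, which is immediate from the linearity of the integral in \eqref{2.3} together with $c_1(L\otimes L')=c_1(L)+c_1(L')$ upon choosing product metrics. Next, for an ample line bundle $P$ one represents $c_1(P)$ by a positive curvature form $\omega_P$; then $f^*\omega_P\ge 0$, so $T_{0,f}(r;c_1(P))=\int_{1}^{r}\frac{dt}{t}\int_{\A(t)}f^*\omega_P\ge 0$ and is nondecreasing in $r$. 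Finally, for an effective divisor $E$ with $f(\A(R_0))\not\subset\supp E$, the First Main Theorem \eqref{2.5} gives $T_{0,f}(r;c_1(E))=N_0(r,f^*E)+m_{0,f}(r;E)+O(1)$; since $\|\sigma\|\le 1$ the first two boundary integrals in \eqref{2.4} are nonnegative and only the integral over $|z|=1$, which does not depend on $r$, can be negative, whence $m_{0,f}(r;E)\ge -O(1)$ and $T_{0,f}(r;c_1(E))\ge -O(1)$.

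With these in hand I would exploit that $H$ is big through Kodaira's lemma: there exist an integer $m>0$, an ample line bundle $A$, and an effective divisor $E$ with $\O(mH)\cong\O(A)\otimes\O(E)$. By additivity $T_{0,f}(r;c_1(A))=m\,T_{0,f}(r;c_1(H))-T_{0,f}(r;c_1(E))+O(1)$, and the lower bound on the effective piece yields $T_{0,f}(r;c_1(A))\le m\,T_{0,f}(r;c_1(H))+O(1)$; combined with $T_{0,f}(r;c_1(A))\ge 0$ this gives $T_{0,f}(r;c_1(A))=O(T_{0,f}(r;c_1(H)))+O(1)$ for this distinguished ample $A$.

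Next I would dominate an arbitrary ample $P$ by $A$: since $A$ is ample, $A^{\otimes c}\otimes P^{-1}$ is ample for all large $c$, hence carries a positive curvature form and $T_{0,f}(r;c\,c_1(A)-c_1(P))\ge 0$; by additivity $T_{0,f}(r;c_1(P))\le c\,T_{0,f}(r;c_1(A))+O(1)=O(T_{0,f}(r;c_1(H)))+O(1)$, while $T_{0,f}(r;c_1(P))\ge 0$. For a general line bundle $L$, fixing an ample $A_0$ one has $L\otimes A_0^{\otimes k}$ ample for large $k$, so $L=(L\otimes A_0^{\otimes k})\otimes A_0^{-k}$ is a difference of two ample bundles $L_1,L_2$; then $T_{0,f}(r;c_1(L))=T_{0,f}(r;c_1(L_1))-T_{0,f}(r;c_1(L_2))+O(1)$, and the two-sided estimates $0\le T_{0,f}(r;c_1(L_i))=O(T_{0,f}(r;c_1(H)))+O(1)$ force $|T_{0,f}(r;c_1(L))|=O(T_{0,f}(r;c_1(H)))+O(1)$, which is subsumed by the asserted bound with the weaker error $O(\log\frac{1}{R_0-r})$.

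The main obstacle I anticipate is the careful sign bookkeeping of the proximity function in the annulus model, where the subtracted integral over $|z|=1$ in \eqref{2.4} destroys the naive positivity of $m_{0,f}$; the resolution is exactly that this term is independent of $r$ and is therefore absorbed into $O(1)$. A secondary technical point is genericity: the step using \eqref{2.5} requires $f(\A(R_0))\not\subset\supp E$, so whenever the divisors produced by Kodaira's lemma or by the above differences contain the image, I would replace them by linearly equivalent effective divisors avoiding $f(\A(R_0))$, which is possible because the relevant linear systems on $V$ are big (respectively base-point free after twisting) and $f$ is nonconstant, the statement being trivial in the constant case.
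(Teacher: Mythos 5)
The paper offers no proof of Theorem \ref{3.2} at all: it is quoted from \cite{NO}, so your proposal can only be measured against the standard argument behind that citation, and up to its final step your proposal \emph{is} that standard argument, carried out correctly. The additivity of $T_{0,f}$ in the line bundle up to $O(1)$, the nonnegativity of $T_{0,f}(r;c_1(P))$ for ample $P$, the lower bound $m_{0,f}(r;E)\ge -O(1)$ (indeed, in \eqref{2.4} the integrals over $|z|=r$ and $|z|=1/r$ are nonnegative because $\|\sigma\|\le 1$, and the subtracted integral over $|z|=1$ is an $r$-independent constant), and the reduction of ``big'' to ``ample plus effective'' via Kodaira's lemma are all sound; your ample-to-ample comparisons could even be shortened to the pointwise estimate $-c\,\omega_A\le\omega_L\le c\,\omega_A$ on the compact $V$, and the fact that you get the error term $O(1)$ rather than $O(\log\frac{1}{R_0-r})$ is harmless, since it is stronger.

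The genuine gap is exactly the step you flag at the end and dismiss: the case $f(\A(R_0))\subset\supp E$ for the Kodaira decomposition $\O(mH)\cong\O(A)\otimes\O(E)$. Your proposed fix --- replacing $E$ by a linearly equivalent effective divisor missing the image --- fails in general: bigness of the linear system $|mH-A|$ does not exclude a fixed component, and the augmented base locus $\bigcap_{m,A}\mathrm{Bs}\,|mH-A|$ of $H$ may contain the Zariski closure of $f(\A(R_0))$, in which case \emph{every} effective $E\in|mH-A|$, for every $m$ and every ample $A$, contains the image. Worse, in that situation the statement itself is false as literally quoted, so no repair of this step can exist. Concretely, let $\pi: V\to \P^2$ be the blow-up at a point with exceptional curve $C$, and $H=\pi^*\O_{\P^2}(1)$, which is big and nef; since $\deg\bigl((mH-A)|_C\bigr)=-\deg(A|_C)<0$, every member of $|mH-A|$ contains $C$. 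Take $f:\A(R_0)\to C\cong\P^1$ induced by $z\mapsto\exp\bigl((R_0-z)^{-2}\bigr)$. With the metric on $H$ pulled back from the Fubini--Study metric, $f^*c_1(H)=(\pi\circ f)^*\omega_{\mathrm{FS}}=0$ because $\pi\circ f$ is constant, so $T_{0,f}(r;c_1(H))=O(1)$; on the other hand, on an arc $|\theta|\le c'(R_0-r)$ of the circle $|z|=r$ one has $\log|f|=\re (R_0-z)^{-2}\ge c(R_0-r)^{-2}$, whence $m_0(r,f)\ge c''(R_0-r)^{-1}$ and, for any ample $L$ (so $\deg(L|_C)\ge 1$), $T_{0,f}(r;c_1(L))\ge c'''(R_0-r)^{-1}$, which is not $O(\log\frac{1}{R_0-r})$. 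The theorem therefore tacitly needs the hypothesis that the image of $f$ is not contained in such a locus (e.g.\ $f(\A(R_0))\not\subset\supp E$ for some choice of $m,A,E$); in the paper's application this is automatic, since there $f$ is algebraically non-degenerate and its image is Zariski dense, and in that regime your argument goes through verbatim.
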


\begin{lemma}\label{3.3}
Let $\varphi$ be a positive monotone increasing function in $r\in [1;R_0)\ (R_0>1)$
Then for every $\lambda >0$, we have
$$\biggl\|\ \frac{d}{dr}(\varphi)\le \left (\frac{\varphi}{R_0-r}\right)^{\lambda +1}.$$ 
\end{lemma}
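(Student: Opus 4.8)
The plan is to run the classical Borel--Nevanlinna growth lemma, adapted so that the exceptional set is weighted by $(R_0-r)^{-(\lambda+1)}$ rather than by Lebesgue measure. First I would fix $\lambda>0$ and define the exceptional set to be precisely the set on which the desired inequality fails,
$$E = \left\{ r \in [1;R_0) : \frac{d\varphi}{dr}(r) > \left(\frac{\varphi(r)}{R_0-r}\right)^{\lambda+1}\right\}.$$
On $E$, dividing the defining inequality by the positive quantity $\varphi(r)^{\lambda+1}$ rearranges it into
$$\frac{1}{(R_0-r)^{\lambda+1}} < \frac{\varphi'(r)}{\varphi(r)^{\lambda+1}},$$
which is exactly the integrand measuring membership in a $\Delta_{R_0}$-set. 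The whole mechanism rests on recognizing that the right-hand side is the derivative of the bounded increasing quantity $-\tfrac{1}{\lambda}\varphi^{-\lambda}$.

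Next I would integrate this inequality over $E$ and enlarge the domain to all of $[1;R_0)$:
$$\int_E \frac{dr}{(R_0-r)^{\lambda+1}} \le \int_E \frac{\varphi'(r)}{\varphi(r)^{\lambda+1}}\,dr \le \int_1^{R_0} \frac{\varphi'(r)}{\varphi(r)^{\lambda+1}}\,dr.$$
Writing $g(r)=-\tfrac{1}{\lambda}\varphi(r)^{-\lambda}$, the last integrand is $g'(r)=\tfrac{d}{dr}\bigl(-\tfrac{1}{\lambda}\varphi(r)^{-\lambda}\bigr)$; monotonicity of $\varphi$ makes $g$ increasing and everywhere nonpositive, so the final integral is bounded by $\tfrac{1}{\lambda}\varphi(1)^{-\lambda}<+\infty$ (positivity of $\varphi$ guarantees $\varphi(1)>0$). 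Hence $\int_E (R_0-r)^{-(\lambda+1)}\,dr<+\infty$, so $E$ is a $\Delta_{R_0}$-set, which is exactly the content of the $\|$ assertion.

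The step requiring the most care is the passage from the pointwise derivative bound to the integral bound, since $\varphi$ is only assumed monotone increasing and need not be absolutely continuous. I would handle this by invoking the classical fact that a monotone function is differentiable almost everywhere and satisfies the Lebesgue inequality; applied to the increasing function $g$ this gives $\int_1^{R_0} g'(r)\,dr \le g(R_0^-)-g(1) \le -g(1) = \tfrac{1}{\lambda}\varphi(1)^{-\lambda}$. The inequality (rather than equality) is all I need, and it points in the favorable direction: any singular part of $\varphi$ contributes nonnegatively to $g(R_0^-)-g(1)$ while being discarded from the absolutely continuous integral, so it only helps. This closes the argument and confirms the stated bound.
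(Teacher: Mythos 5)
Your proposal is correct and follows essentially the same route as the paper: the same exceptional set $E$, the same pointwise rearrangement $(R_0-r)^{-(\lambda+1)}<\varphi'/\varphi^{\lambda+1}$ on $E$, and the same integration against the antiderivative $-\tfrac{1}{\lambda}\varphi^{-\lambda}$ to conclude $E$ is a $\Delta_{R_0}$-set. Your explicit appeal to Lebesgue's inequality $\int_1^{R_0} g'(r)\,dr\le g(R_0^-)-g(1)$ for the merely monotone (possibly non--absolutely-continuous) $\varphi$ is a welcome extra care that the paper's proof leaves implicit.
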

\begin{proof}
Let $E=\left\{r\in [1;R_0)\ :\ \frac{d}{dr}(\varphi)> \left (\frac{\varphi}{R_0-r}\right)^{\lambda +1}\right\}$. Since $\varphi$ is a monotone increasing function, its derivative $\frac{d}{dr}(\varphi)$ exists almost everywhere. Hence $E$ is a Borel measurable subset of  $[1;R_0)$. Then we have
$$ \int\limits_E\frac{dt}{(R_0-r)^{\lambda +1}}\le \int\limits_E\frac{\varphi'}{\varphi^{\lambda +1}}dt\le \frac{1}{\lambda}\left (\frac{1}{\varphi^\lambda (1)}-\lim\limits_{r\rightarrow R_0^-}\frac{1}{\varphi^\lambda (r)}\right )=O(1).$$
The lemma is proved.
\end{proof}

\begin{lemma}[{see \cite[Lemma 2.12]{Nog81}}]
\label{3.1}
Let $f$ be a nonzero multiplicative meromorphic function on $\A (R_0)$. Then for each positive integer $k$ and positive number $\epsilon$, we have
$$\biggl\|\ m_0\left (r,\dfrac{f'}{f}\right)\le (4+\epsilon)\log^+T_0(r,f)+O\left(\log^+\frac{1}{R_0-r}\right). $$
\end{lemma}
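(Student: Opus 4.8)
The plan is to adapt Noguchi's proof of the classical lemma on the logarithmic derivative to the annulus, the key analytic inputs being the concavity of the logarithm, the Jensen-type identity \eqref{2.1}, and the calculus lemma (Lemma \ref{3.3}). Although $f$ is only multiplicative, its logarithmic derivative $f'/f=d\log f$ is a genuine single-valued meromorphic function on $\A(R_0)$, so all the Nevanlinna functions of $f'/f$ are well defined and $\log|f|$ is single valued. First I would bound the proximity function by passing to an $L^2$-mean. Since $\log^+x=\tfrac12\log^+x^2$ and the logarithm is concave, Jensen's inequality applied to each of the two moving boundary integrals in the definition of $m_0$ gives, up to an $O(1)$,
$$m_0\!\left(r,\tfrac{f'}{f}\right)\le \tfrac12\log^+\!\Big(\tfrac1{2\pi}\!\int_{|z|=r}\!\big|\tfrac{f'}{f}\big|^2 d\theta\Big)+\tfrac12\log^+\!\Big(\tfrac1{2\pi}\!\int_{|z|=1/r}\!\big|\tfrac{f'}{f}\big|^2 d\theta\Big)+O(1),$$
the correction term on the fixed circle $|z|=1$ contributing only a constant since $r=1$ there.

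The second step is to control the mean values $\tfrac1{2\pi}\int_{|z|=\rho}|f'/f|^2\,d\theta$ for $\rho\in\{r,1/r\}$ by the characteristic function. Writing $u=\log|f|$, one has $|f'/f|^2=|\nabla u|^2$ away from the divisor of $f$, and the annulus Jensen identity \eqref{2.1} (applied in Ahlfors--Shimizu form, i.e.\ to a quantity built from $\log(1+|f'/f|^2)$ and from $u$) expresses the circular integrals of $|f'/f|^2$ through the radial derivatives of an expression dominated by $T_{0}(r,f)+N_0(r,\nu_f^\infty)$, hence by $T_{0}(r,f)$ via the First Main Theorem \eqref{2.5}. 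Concretely I would produce a positive, monotone increasing majorant $\varphi(r)=O(T_0(r,f))+O(1)$ with $\tfrac1{2\pi}\int_{|z|=r}|f'/f|^2 d\theta \le \tfrac{d}{dr}\!\big(r\,\tfrac{d\varphi}{dr}\big)$ (and the symmetric bound for $\rho=1/r$), which is the annulus analogue of the classical estimate.

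Then I would apply the calculus lemma (Lemma \ref{3.3}) twice. Each application replaces a radial derivative $\tfrac{d\varphi}{dr}$ by $(\varphi/(R_0-r))^{\lambda+1}$ outside a $\Delta_{R_0}$-set; iterating on the second-order expression $\tfrac{d}{dr}(r\,\tfrac{d\varphi}{dr})$ and feeding the result into the two $\tfrac12\log^+$ factors above turns each $\log^+$ of a radial derivative into $\log^+T_0(r,f)$ plus an error $O(\log^+\tfrac1{R_0-r})$ coming from the $(R_0-r)^{-(\lambda+1)}$ factors. Tracking the numerical constants through the concavity reduction and the two applications of Lemma \ref{3.3} (one for each radial differentiation) produces the factor $(4+\epsilon)$, with $\epsilon$ absorbing the passage $\lambda\to 0^+$. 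Since a finite union of $\Delta_{R_0}$-sets is again a $\Delta_{R_0}$-set, collecting the two boundary contributions yields the stated inequality outside an admissible exceptional set.

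The main obstacle I anticipate is the bookkeeping forced by the three-circle structure of the annulus proximity function: unlike the disc case there are two moving circles $|z|=r$ and $|z|=1/r$, related by the inversion $z\mapsto 1/z$, together with the fixed correction circle $|z|=1$. To run the calculus lemma one needs a single monotone increasing majorant dominating both moving contributions simultaneously, so the identity \eqref{2.1} must be arranged symmetrically in $r$ and $1/r$. Handling this symmetry, and verifying that the resulting exceptional set is genuinely a $\Delta_{R_0}$-set rather than merely of finite measure, is the delicate part of the argument, and is precisely the point at which the technique of \cite{Nog81} is invoked.
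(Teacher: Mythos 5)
Your outline breaks down at the second step, and the failure is not a bookkeeping issue but the central difficulty of the lemma. You propose a monotone majorant $\varphi(r)=O(T_0(r,f))+O(1)$ with $\tfrac1{2\pi}\int_{|z|=r}|f'/f|^2\,d\theta\le\tfrac{d}{dr}\bigl(r\,\tfrac{d\varphi}{dr}\bigr)$. No such $\varphi$ can exist in general: integrating the circular means twice in $r$ recovers (up to constants) the area integral $\int_{\A(r)}|f'/f|^2\,dA$, and near a zero or pole $z_0$ of $f$ of order $m$ one has $|f'/f|^2\sim m^2|z-z_0|^{-2}$, which is not locally area-integrable in dimension two. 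So the iterated integral is $+\infty$ as soon as $f$ has a single zero or pole inside the annulus, while $T_0(r,f)$ is finite for each $r<R_0$. Even for zero- and pole-free $f$ the claim fails: for $f=e^{nz}$ the circular $L^2$ mean of $f'/f$ is $n^2$ while $T_0(r,f)=O(nr)$, so the raw $L^2$ mean grows quadratically in a parameter in which the characteristic grows only linearly. The first step (concavity of $\log$) and the third step (two applications of Lemma \ref{3.3}, with the union of exceptional $\Delta_{R_0}$-sets again a $\Delta_{R_0}$-set) are sound and do match the paper, but they have nothing to feed on without a correct area estimate.

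The missing idea is precisely the damping factor supplied by the Griffiths--King negatively curved singular metric, which is how the paper proceeds. One replaces $|g|^2=|f'/f|^2$ by
$$\zeta=\dfrac{a(|f|+|f|^{-1})^{2\delta}}{(\log b(1+|f|^2))^2(\log b(1+|f|^{-2}))^2}\,|g|^2,$$
i.e.\ the density of $f^*\Psi$ for a form $\Psi$ on $\overline\C$ with $\ric\Psi\ge(|\omega|+|\omega|^{-1})^{-2\delta}\Psi$; this weight is single-valued because $f$ is multiplicative (only $|f|$ enters), kills the non-integrability at $\div^0(f)\cup\div^\infty(f)$, and via the current identity $dd^c\log\zeta=f^*\ric\Psi-\delta([\div^0(f)]+[\div^\infty(f)])+[\div^0(g)]-[\div^\infty(g)]$ together with the annulus Jensen formula \eqref{2.1} yields the finite area bound $\int_1^r\tfrac{dt}{t}\int_{\A(t)}\tfrac{\zeta}{(|f|+|f|^{-1})^{2\delta}}\tfrac{i}{2\pi}dz\wedge d\bar z\le m_0(r,g)+2(1+\delta)T_0(r,f)+O(1)$. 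Only after this does the concavity-plus-calculus-lemma machinery you describe apply. Note also that the constant $4+\epsilon$ does not arise from tracking $\lambda$ through two uses of Lemma \ref{3.3}, as you suggest, but from the self-referential absorption: the argument first gives $m_0(r,g)\le 2\log^+m_0(r,g)+4\log^+T_0(r,f)+O(\log^+\tfrac1{R_0-r})$, and the term $2\log^+m_0(r,g)$ is absorbed using $\log^+\psi\le\epsilon'\psi+O(1)$. An alternative correct elementary route on annuli would be a Gol'dberg--Grinshtein type $L^1$ estimate via a differentiated Poisson--Jensen formula (as in the Khrystiyanyn--Kondratyuk theory), but for a merely multiplicative $f$ only $\log|f|$ is single-valued, which is exactly why the metric approach is the natural one here.
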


\begin{proof}
We denote by $\omega$ the standard complex coordinate on $\overline{\C}=\C\cup{\infty}$ and consider the canonical K\"{a}hler form
$$ \Psi_0=\frac{1}{(1+|\omega|^2)^2}\frac{i}{2\pi}d\omega\wedge d\bar\omega.$$
By Griffiths-King \cite[Proposition 6.9]{GK}, we may choose suitably positive constants $a,b$ and $\delta\ (\delta <1)$ such that the form
$$ \Psi=\frac{a(|\omega|+|\omega|^{-1})^{2+2\delta}}{(\log b(1+|\omega|^2))^2(\log b(1+|\omega|^{-2}))^2}\Psi_0 $$ 
satisfies
$$ \ric\Psi\ge (|\omega|+|\omega|^{-1})^{-2\delta}\Psi. $$
Since $f$ is multiplicative, it is easy to see that $f^*\Psi$ is well-defined. We set
$$ 
\begin{cases}
&g=\dfrac{f'}{f},\ \zeta=\dfrac{a(|f|+|f|^{-1})^{2\delta}}{(\log b(1+|f|^2))^2(\log b(1+|f|^{-2}))^2}|g|^2,\\
&f^*\Psi=\dfrac{a(|f|+|f|^{-1})^{2\delta}}{(\log b(1+|f|^2))^2(\log b(1+|f|^{-2}))^2}|g|^2\dfrac{i}{2\pi}dz\wedge d\bar z=\zeta\dfrac{i}{2\pi}dz\wedge d\bar z.
\end{cases}
$$
On the other hand, we have
$$ g^*\ric\Psi=dd^c\log\zeta\ge (|f|+|f|^{-1})^{-2\delta}\frac{i}{2\pi}dz\wedge d\bar z,$$
and 
$$ dd^c\log\zeta=f^*\ric\Psi -\delta\left([\div^0(f)]+[\div^\infty(f)]\right)+[\div^0(g)]-[\div^\infty(g)] $$
in the sense of currents.
By the definition, we have 
$$[\div^{\infty}(g)]=\left[\supp(\div^0(f)+\div^\infty(f))\right]\ge [\div^0(f)]+[\div^\infty(f)],$$ and hence we deduce
$$(|f|+|f|^{-1})^{-2\delta}\frac{i}{2\pi}dz\wedge d\bar z\le (1+\delta)\left([\div^0(f)]+[\div^\infty(f)]\right)+dd^c\log\zeta.  $$
Then, by the formula (\ref{2.1}), we have
\begin{align*}\int\limits_1^t&\frac{dt}{t}\int\limits_{\A (t)}\frac{\zeta}{(|f|+|f|^{-1})^{2\delta}}\frac{i}{2\pi}dz\wedge d\bar z\le (1+\delta)\left(N_0(r,\div^0(f))+N_0(r,\div^0(f))\right)\\
&+\frac{1}{4\pi}\int\limits_{|z|=r}\log\zeta d\theta+\frac{1}{4\pi}\int\limits_{|z|=1/r}\log\zeta d\theta -2\log\zeta\int\limits_{|z|=1}d^c\log\zeta -\frac{1}{2\pi}\log\zeta d\theta.
\end{align*}
By the definition of $\zeta$, we have
$$ \frac{1}{4\pi}\int\limits_{|z|=r}\log\zeta d\theta+\frac{1}{4\pi}\int\limits_{|z|=1/r}\log\zeta d\theta \le m_0(r,g)+\delta \left (m_0(r,f)+m_0\left(r,\frac{1}{f}\right)\right)+O(1). $$
Therefore, we get
$$ \int\limits_1^t\frac{dt}{t}\int\limits_{\A (t)}\frac{\zeta}{(|f|+|f|^{-1})^{2\delta}}\frac{i}{2\pi}dz\wedge d\bar z\le m_0(r,g)+2(1+\delta)T_0(r,f)+O(1). $$
For simplicity, we set $\Gamma=\{|z|=r\}\cup\{|z|=1/r\}$. We now have the following estimate
\begin{align*}
\|\ m_0(r,g)&=\frac{1}{4\pi}\int\limits_{\Gamma (r)}\log^+\bigl ((\zeta(|f|+|f|^{-1}))^{-2\delta} (\log (1+|f|^2))^2(\log (1+|f|^{-2}))^2\bigl )d\theta +O(1)\\ 
& \le\frac{1}{4\pi}\int\limits_{\Gamma (r)}\log^+(\zeta(|f|+|f|^{-1}))^{-2\delta} d\theta +\log^{+}\bigl (m_0(r,f)+m_0(r,\frac{1}{f})\bigl)+O(1)\\
&\le\frac{1}{2}\log\biggl (1+\frac{1}{2\pi}\int\limits_{\Gamma (r)}\log^+\bigl(\zeta(|f|+|f|^{-1})\bigl)^{-2\delta} d\theta\biggl)+2\log^{+}T_0(r,f)+O(1)\\
&=\frac{1}{2}\log\biggl (1+\frac{1}{2r}\biggl (\frac{d}{dr}\int\limits_{\A (r)}\frac{\zeta}{(|f|+|f|^{-1})^{2\delta}}\frac{i}{2\pi}dz\wedge d\bar z\biggl )\biggl )+2\log^{+}T_0(r,f)+O(1)\\
&\le \frac{1}{2}\log\biggl (1+\frac{1}{2r}\biggl (\int\limits_{\A (r)}\frac{\zeta}{(|f|+|f|^{-1})^{2\delta}}\frac{i}{2\pi}dz\wedge d\bar z\biggl )^2\biggl )\\
&\hspace{20pt}+2\log^{+}T_0(r,f)+O(\log^+\frac{1}{R_0-r})\\
&=\frac{1}{2}\log\biggl (1+\frac{1}{2r}\biggl (r\frac{d}{dr}\int\limits_{1}^r\int\limits_{\A (t)}\frac{\zeta}{(|f|+|f|^{-1})^{2\delta}}\frac{i}{2\pi}dz\wedge d\bar z\biggl )^2\biggl )\\
&\hspace{20pt}+2\log^{+}T_0(r,f)+O(\log^+\frac{1}{R_0-r})\\
&\le\frac{1}{2}\log\biggl (1+\frac{r}{2}(\int\limits_{1}^r\int\limits_{\A (t)}\frac{\zeta}{(|f|+|f|^{-1})^{2\delta}}\frac{i}{2\pi}dz\wedge d\bar z)^4\biggl )\\
&\hspace{20pt}+2\log^{+}T_0(r,f)+O(\log^+\frac{1}{R_0-r})\\
&\le\frac{1}{2}\log\biggl (1+\frac{r}{2}(m_0(r,g)+2(1+\delta)T_0(r,f))^4\biggl )\\
&\hspace{20pt}+2\log^{+}T_0(r,f)+O(\log^+\frac{1}{R_0-r})\\
&\le 2\log^+m_0(r,g)+4\log^+T_0(r,f)+O(\log^+\frac{1}{R_0-r}).
\end{align*}
We note that, for every non negative function $\psi (r)$ and $\epsilon' >0$, $ \log^+\psi (r)\le  \epsilon' \psi(r)+O(1).$
Then we have
$$\|\ m_0(r,g)\le 2\epsilon' m_0(r,g)+4\log^+T_0(r,f)+O(\log r)+O(1),$$
$$ i.e., \|\ m_0(r,g)\le\frac{4}{1-2\epsilon'}T_0(r,f)+O(\log r)+O(1).  $$
Choosing $\epsilon'=\frac{1}{2}(1-\frac{4}{4+\epsilon}),$ we get
$$ \|\ m_0(r,g)\le (4+\epsilon)T_0(r,f)+O(\log r)+O(1). $$
The lemma is proved.
\end{proof}

\begin{proof}[Proof of Lemma \ref{1.1}]
By Weil \cite[p. 101]{W58} (see also \cite{Nog81}), there is a multiplicative meromorphic function $\varphi$ on $N$ and a holomorphic one form $\omega_1$ on $N$ such that 
$$ \omega=d\log\varphi +\omega_1.$$
We set
$$ f^*\omega_1=\xi_1dz\text{ and }f^*(d\log\varphi)=\xi_2dz, $$
where $\xi_2=\frac{(\varphi\circ f)'}{\varphi\circ f}$.
Then we have
\begin{align}\label{3.4}
m_{0}(r,\xi)\le m_0(r,\xi_1)+m_0(r,\xi_2)+O(1).
\end{align}

Firstly, we are going to estimate $m_0(r,\xi_1)$. From the compactness of $N$, there is a positive constant $C$ such that
$$ |\omega_1(v)|^2\le C.h(v,v)\text{ for all holomorphic tangent vectors }v\in T_N.$$
Setting $f^*\Omega=s(z)\frac{i}{2\pi}dz\wedge d\bar z$, we have
$$ |\xi_1(z)|^2\le C.s(z).$$
Hence, by some simple computations we have
\begin{align*}
m_0(r,\xi_1)\le&\frac{1}{2\pi}\int\limits_{|z|=r}\log(1+|\xi_1|^2)d\theta+\int\limits_{|z|=\frac{1}{r}}\log(1+|\xi_1|^2)d\theta+O(1)\\ 
\le& \frac{1}{4\pi}\int\limits_{|z|=r}\log(1+s(z))d\theta+\frac{1}{4\pi}\int\limits_{|z|=r}\log(1+s(x))d\theta+O(1)\\
\le&\frac{1}{2}\log\left (1+\frac{1}{2\pi}\int\limits_{|z|=r}sd\theta\right )+\frac{1}{2}\log\left (1+\frac{1}{2\pi}\int\limits_{|z|=\frac{1}{r}}sd\theta\right )+O(1)\\
=&\frac{1}{2}\log\left (1+\frac{1}{2\pi r}\dfrac{d}{dr}\int\limits_{1\le |z|<r}tsdt\wedge d\theta\right )\\
&+\frac{1}{2}\log\left (1+\frac{r}{2\pi}\dfrac{d}{dr}\int\limits_{1\ge |z|>\frac{1}{r}}tsdt\wedge d\theta\right )+O(1)\\
=&\frac{1}{2}\log\left (1+\frac{1}{2\pi r}\dfrac{d}{dr}\int\limits_{1\le |z|<r}f^*\Omega\right )+\frac{1}{2}\log\left (1+\frac{r^3}{2\pi}\dfrac{d}{dr}\int\limits_{1\ge |z|>\frac{1}{r}}f^*\Omega\right )+O(1)\\
\le&\frac{1}{2}\log\left (1+\frac{1}{2\pi r}(\frac{1}{R_0-r}\int\limits_{1\le |z|<r}f^*\Omega)^2\right )\\
&+\frac{1}{2}\log\left (1+\frac{r^3}{2\pi}(\frac{1}{R_0-r}\int\limits_{1\ge |z|>\frac{1}{r}}f^*\Omega)^2\right )+O(1),
\end{align*} 
for all $r\in [0;R_0)$ outside an $\Delta_{R_0}$-set. Here the last inequality comes from the fact that $\int\limits_{1\le |z|<r}f^*\Omega$ and $\int\limits_{1\ge |z|>\frac{1}{r}}f^*\Omega$ are both monotone increasing functions in $r\in [1;R_0)$ and Lemma \ref{3.3} is applied.
Moreover, we have
$$\int\limits_{1\le |z|<r}f^*\Omega=r\frac{d}{dr}\int\limits_{1}^r\frac{d}{dt}\int\limits_{1\le |z|<t}f^*\Omega\ \text{ and }\ \int\limits_{1\ge |z|>\frac{1}{r}}f^*\Omega=r\frac{d}{dr}\int\limits_{1}^r\frac{d}{dt}\int\limits_{1\ge |z|>\frac{1}{t}}f^*\Omega. $$
Then we have
\begin{align}\label{3.5}
\begin{split}
m_0(r,\xi_1)&\le \log^{+}\left (\frac{d}{dr}\int\limits_{1}^r\frac{d}{dt}\int\limits_{1\le |z|<t}f^*\Omega\right )+ \log^{+}\left (\frac{d}{dr}\int\limits_{1}^r\frac{d}{dt}\int\limits_{1\ge |z|>\frac{1}{t}}f^*\Omega\right )\\
&\ \ \ +2\log^+\frac{1}{R_0-r}+O(1)\\
&\le 2\log^{+}\left (\frac{d}{dr}\int\limits_{1}^r\frac{d}{dt}\int\limits_{\frac{1}{t}< |z|<t}f^*\Omega\right )+2\log^+\frac{1}{R_0-r}+O(1)\\
&\le 2\log^{+}\left (\frac{d}{dr}T_{0,f}(r;\Omega)\right )+2\log^+\frac{1}{R_0-r}+O(1)\\
&\le 2\log^+\left (\frac{1}{R_0-r}T_{0,f}(r;\Omega)\right )^2+2\log^+\frac{1}{R_0-r}+O(1)\\
&=4\log^+T_{0,f}(r;\Omega)+4\log^+\frac{1}{R_0-r}+O(1).
\end{split}
\end{align}
Here, the fourth inequality holds because of Lemma \ref{3.3}.

Now, we will estimate $m_0(r,\xi_2)$. By Lemma \ref{3.1} we have
\begin{align}\label{3.6}
\biggl\|\ m_0(r,\xi_2)=m_0\left(\frac{(\varphi\circ f)'}{\varphi\circ f}\right)\le O\left(\log^+T_{0}(r,\varphi\circ f)+\log^+\frac{1}{R_0-r}\right).
\end{align}
Take $\| .\|_1$ and $\| .\|_2$ the Hermitian fiber metrics on $L(\div^0(\varphi))$ and $L(\div^\infty(\varphi))$ respectively. We take $\sigma_1\in H^0(N,L(\div^0(\varphi)))$ and $\sigma_2\in H^0\left(N,L(\div^\infty(\varphi))\right)$ so that $\div(\sigma_1)=\div^0(\varphi)$, $\div(\sigma_2)=\div^\infty(\varphi)$ and $|\varphi|=\frac{\|\sigma_1\|_1}{\|\sigma_2\|_2}\le\frac{1}{\|\sigma_2\|_2}$ (because of compactness of $N$, we may suppose that $\|\sigma_1\|_1\le 1$ and $\|\sigma_2\|_2\le 1$). Then we have
\begin{align}\label{3.7}
\begin{split}
m_0(r,\varphi\circ f)&=\frac{1}{2\pi}\int\limits_{|z|=r}\log\bigl (1+|\varphi\circ f|^2\bigl )d\theta+\frac{1}{2\pi}\int\limits_{|z|=\frac{1}{r}}\log\bigl (1+|\varphi\circ f|^2\bigl )d\theta +O(1)\\
&\le \frac{1}{2\pi}\int\limits_{|z|=r}\log\frac{1}{\|\sigma_2\circ f\|_2}d\theta+\frac{1}{2\pi}\int\limits_{|z|=\frac{1}{r}}\log\frac{1}{\|\sigma_2\circ f\|_2}d\theta +O(1)\\
&=m_{0,f}(r,\div^\infty(\varphi)).
\end{split}
\end{align}
On the other hand, we have
\begin{align}\label{3.8}
\begin{split}
N_0(r,\div^{\infty}(\varphi\circ f))&=N_0(r,\div (\sigma_2\circ f))\\
&=T_{0,f}(r,c_1(\div^\infty(\varphi)))-m_{0,f}(r,\div^\infty(\varphi))+O(1).
\end{split}
\end{align}
From (\ref{3.7}), (\ref{3.8}) and by Lemma \ref{3.2}, we have
$$ T_{0}(r,\varphi\circ f)\le T_{0,f}(r,c_1(\div^\infty(\varphi)))+O(1)= O(T_{0,f}(r,\Omega)).$$
Combining the above inequality with (\ref{3.6}), we get
\begin{align}\label{3.9}
\biggl\|\ m_0(r,\xi_2)\le O\left(\log^+T_{0,f}(r,\Omega)+\log^+\frac{1}{R_0-r}\right).
\end{align}

From (\ref{3.4}), (\ref{3.5}) and (\ref{3.9}) we get
$$\biggl\|\ m_0(r,\xi) \le O(\log^+T_{0,f}(r,\Omega))+O\left(\log^+\frac{1}{R_0-r}\right).$$
The lemma is proved. 
\end{proof}

\section{Proof of Second main theorem for holomorphic curves}

We have the following lemma from \cite{Q12} (which is a special case of \cite[Lemma 3.14]{NWY}).
\begin{lemma}[{see \cite[Lemma 4.4]{Q12}}]\label{10.1}
Let $M$ be a semi-Abelian variety and let $D$ be an algebraic divisor on $M$ such that $\St(D)=\{0\}$.
Then there exists a smooth equivariant compactification $\overline M$ of $M$ such that the closure $\overline D$ of $D$ in $\overline M$ is big, generally positioned.
\end{lemma}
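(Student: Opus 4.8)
The plan is to build the compactification by combining the fibration structure of $M$ over the abelian variety $M_0$ with toric geometry along the torus directions, and then to read off both general position and bigness from the stabilizer hypothesis $\St(D)=\{0\}$. First I would produce equivariant compactifications. Using the exact sequence $0\rightarrow(\C^*)^p\rightarrow M\overset{\eta}{\rightarrow}M_0\rightarrow 0$, any fan $\Sigma$ in the cocharacter space $\R^p$ of $(\C^*)^p$ yields a toric variety $X_\Sigma$ compactifying $(\C^*)^p$, and forming the associated bundle over $M_0$ produces an equivariant compactification $\overline M_\Sigma$ of $M$ whose boundary $\partial M$ is the union of the toric boundary strata of the fibres. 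Choosing $\Sigma$ to be a regular (smooth) fan makes $\overline M_\Sigma$ smooth with $\partial M$ a simple normal crossing divisor, which is exactly the setting of the logarithmic trivialization $(\ref{2.7})$. By the orbit--cone correspondence, the $M$-orbits in $\overline M_\Sigma$ are precisely the strata indexed by the cones of $\Sigma$, the open orbit being $M$ itself.

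Second, I would arrange general position. Since $D$ is a divisor we have $\overline D\cap M=D\neq M$, so the open orbit is not contained in $\overline D$; it remains to prevent $\overline D$ from containing any boundary orbit. I would achieve this by refining the fan: a sufficiently fine regular subdivision $\Sigma'$ of $\Sigma$ forces the closure of $\supp D$ to meet every boundary stratum in its expected (proper) dimension, so that no orbit closure of positive codimension can lie inside $\overline D$. This is a dimension-count and genericity argument carried out fibrewise in the toric direction, and it guarantees that $\overline D$ is generally positioned in the sense defined in Section 2.

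Third, and this is the heart of the matter, bigness. I would compute the Iitaka dimension $\kappa(\overline M,L(\overline D))$ and show it equals $\dim M$. The mechanism is that the rational map $\Phi$ associated to a high multiple $|mL(\overline D)|$ has generic fibre a translate of a connected subgroup of $M$ contained in the stabilizer $\St(D)$: the invariance of the relevant linear systems under this subgroup follows from theorem-of-the-square type identities for effective divisors on semi-Abelian varieties, mirroring the classical fact that an effective divisor on an abelian variety is ample exactly when its stabilizer is finite. Since $\St(D)=\{0\}$, the generic fibre is a point, $\Phi$ is generically finite onto its image, and therefore $\kappa(\overline M,L(\overline D))=\dim M$, i.e.\ $L(\overline D)$ is big.

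The hard part will be this final bigness step, and specifically the interaction between the group-theoretic description of the Iitaka fibration on the open part $M$ and the boundary behaviour along $\partial M$. One must choose the fan $\Sigma$ and its refinement so that the sections of $L(\overline D)^{\otimes m}$ extend across $\partial M$ with the correct order of growth, so that generic finiteness on $M$ genuinely upgrades to bigness of $L(\overline D)$ on the compact $\overline M$; controlling this simultaneously with the general-position refinement of the fan is the delicate point, and it is where the structure theory of subvarieties of semi-Abelian varieties together with the finiteness of $\St(D)$ enters in an essential way. For the complete argument I would follow the constructions of \cite{NWY} and \cite{Q12}.
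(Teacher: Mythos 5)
The first thing to note is that the paper offers no proof of this lemma at all: it is imported verbatim from \cite[Lemma 4.4]{Q12}, itself a special case of \cite[Lemma 3.14]{NWY}, so the only argument on record is the one in those references. Your sketch correctly reproduces the ambient framework (equivariant compactifications of $M$ as $X_\Sigma$-bundles over $M_0$ attached to smooth fans $\Sigma$ in the cocharacter space of $(\C^*)^p$, with one $M$-orbit per cone), but on the two assertions the lemma actually makes it does not deliver. For bigness — which you rightly call the heart of the matter — your Ueno-type argument (generic fibre of the Iitaka map is a translate of a subgroup contained in $\St(D)$, hence a point) is a statement about the open part $M$, whereas the lemma asserts bigness of $L(\overline D)$ on the compact $\overline M$; the passage across $\partial M$, i.e.\ choosing the fan so that sections of $L(\overline D)^{\otimes m}$ extend with controlled growth along the boundary, is precisely the content of \cite[Lemma 3.14]{NWY}, and you explicitly defer it to those references. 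Since that deferred step is the lemma itself, what you have is a programme consistent with the cited proof rather than a proof.

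The general-position step also fails as argued. There is no genericity to exploit: $\overline D$ is the closure of the \emph{fixed} divisor $D$, so a ``sufficiently fine'' regular subdivision cannot force it to meet boundary strata in expected dimension — refinement blows up the boundary but does not change the asymptotic directions in which $D$ escapes to infinity along the torus fibres, and it is to those directions that the fan must be adapted (this is why the compactification in the lemma depends on $D$). Moreover, ``proper intersection with every stratum'' is not the relevant condition: general position only forbids $\overline D$ from containing an entire $M$-orbit, and excess intersection is perfectly compatible with that. A toy example shows how the two properties decouple and where the stabilizer really enters: for $M=(\C^*)^2$, $\overline M=\P^1\times\P^1$ and $D=\{1\}\times\C^*$, the closure $\overline D=\{1\}\times\P^1$ meets two boundary orbits in single points and contains no orbit, so it is generally positioned, yet $L(\overline D)\cong\O(1,0)$ is not big — consistently with $\St(D)=\{1\}\times\C^*\ne\{0\}$. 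So $\St(D)=\{0\}$ is the engine of the bigness claim, not of general position, and your fan-refinement mechanism engages neither correctly.
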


\vskip0.2cm
\noindent
\textbf{(c)  Proof of Theorem \ref{1.2}}

Let $\overline M$ be a smooth equivariant compactification of $M$ which is chosen as in Lemma \ref{10.1}. We may regard $f$ as a holomorphic curve into 
$\overline M$.

As in the Section $\S 2$, denote by $\mathcal M_{\overline M}^{*}$ the sheaf of germs of meromorphic functions on $\overline M$ which do not identically vanish, and the sheaf $\mathcal U^{1}_{\overline M}$ is defined by
$$\begin{array}{ccccc}
0\rightarrow \C^{*}\rightarrow &\mathcal M^{*}_{\overline M}&\overset{d\log}{\rightarrow}&\mathcal U^{1}_{\overline M} & \rightarrow 0.\\
&\gamma&\mapsto &d\log\gamma & \\
\end{array}$$
Since $M$ is a semi-Abelian variety, by taking the standard coordinates from the universal cover $\C^n\rightarrow M$ of $M$ which gives automatically sections $\omega_i$ in $H^{0}({\overline M}, \mathcal U^1_{\overline M})$, we may assume that
 $\omega^{i}\in H^{0}({\overline M}, \mathcal U^1_{\overline M})$ for all $1\le i\le n$.

We define functions $\xi^i$ by setting $f^*\omega^i=\xi^i dz$. Then by Lemma \ref{1.1} we have
$$
\biggl\|\ m(r,\xi^i )\le O(\log^+ T_{0,f}(r,\Omega ))+O\left(\log^+ \frac{1}{R_0-r}\right),\ \forall 1\le i\le n.
$$
If $D$ is a divisor on $M$ such that $\overline D$ is generally positioned in $\overline M$, by Theorem \ref{3.2} we obtain
\begin{align}\label{9.5}
\biggl\|\ m(r,\xi^i )\le O(\log^+ T_{0,f}(r,c_1(\overline D) ))+O\left(\log^+ \frac{1}{R_0-r}\right),\ \forall 1\le i\le n.
\end{align}

\noindent
\begin{proof}[Proof of theorem \ref{1.2}]

 Without loss of generality we assume that $D$  is irreducible. If $\mathrm{St}(D)\not=\{0\}$, by taking the quotient $q: M\rightarrow M/\mathrm{St}(D)$ and deal with the holomorphic curve  $q\circ f:\A(R_0)\rightarrow M/\mathrm{St}(D)$ and the divisor $D/\mathrm{St}(D)$, then we may reduce to the case where $D$ is
 irreducible and $\mathrm{St}(D)=\{0\}$. Thus we may assume that $D$ is irreducible and $\mathrm{St}(D)=\{0\}$. 

By Lemma \ref{10.1}, there exists a smooth equivariant compactification $\overline M$ of $M$, in which $\overline D$ is generally positioned and big.

Let $J_k(f):\A(R_0)\rightarrow J_k(\overline M, \log \partial M)\cong\overline M\times \C^{nk}$ be the $k$-jet lifting of $f$. We have the following claim from \cite[Claim 6.1]{Q12}

\begin{claim}\label{new4.53} There exists a number $k_0$ such that 
\begin{align*}
\pi_2\big (J_{k_0}(\overline D, \log \partial M)\big )\cap \pi_2(\overline{J_{k_0}f(\A(R_0) )}^{\mathrm{Zar}})\not= \pi_2(\overline{J_{k_0}f(\A(R_0) )}^{\mathrm{Zar}}),
\end{align*}
where $\overline{J_{k_0}f(\A(R_0) )}^{\mathrm{Zar}}$ is the Zariski closure of $J_{k_0}f(\A(R_0) )$ in $J_k(\overline M, \log \partial M)$.
\end{claim}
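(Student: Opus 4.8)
The statement is purely algebro-geometric: it concerns only the Zariski closure $V_k:=\overline{J_kf(\A(R_0))}^{\mathrm{Zar}}$ and the jet projection $\pi_2$, and the sole feature of the source $\A(R_0)$ that enters is that $f$ is algebraically non-degenerate, i.e.\ that $f(\A(R_0))$, and hence each $J_kf(\A(R_0))$, is Zariski dense in its ambient variety. Since this is a property of the image and not of the domain, the situation is formally identical to the entire-curve and punctured-disc cases, and the plan is to run the contradiction argument of \cite[Claim 6.1]{Q12} (adapting \cite{NWY}) on the closures $V_k$. First I would assume the negation: that for every $k\ge 1$,
$$\pi_2(V_k)\subseteq \pi_2\big(J_k(\overline D,\log\partial M)\big)=:Z_k,$$
and aim for a contradiction with the hypotheses $\St(D)=\{0\}$ and the general position of $\overline D$.

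The first reduction uses the group action. The trivialization \eqref{2.7} is built from the $M$-invariant logarithmic $1$-forms $\omega^j$, so the translation action of $M$ on $J_k(\overline M,\log\partial M)$ moves only the base factor $\overline M$ and fixes the jet part; in particular $\pi_2$ is $M$-invariant. By algebraic non-degeneracy $\pi_1(V_k)=\overline M$, so for a generic $w\in\pi_2(V_k)$ there is $(x,w)\in V_k$ with $x\in M$; translating by $M$ and taking Zariski closures then gives
$$\overline{M\cdot V_k}^{\mathrm{Zar}}=\overline M\times W_k,\qquad W_k:=\overline{\pi_2(V_k)}^{\mathrm{Zar}}.$$
As $Z_k$ is algebraic (the excerpt records that $\pi_2$ is proper) and hence closed, the negation yields $W_k\subseteq Z_k$ for all $k$: the entire $M$-invariant variety $\overline M\times W_k$ is carried, in the jet directions, into the order-$k$ tangency locus of $\overline D$.

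The core step is then to feed the chain $\{W_k\subseteq Z_k\}_{k\ge 1}$ into the structure theory of jet closures of algebraically non-degenerate curves into semi-abelian varieties. Here I would invoke, exactly as in \cite{Q12,NWY}, the analysis of the prolongation maps $W_{k+1}\to W_k$: the sequence $W_k$ stabilizes, and its stabilizer under the induced action cuts out a sub-semi-abelian variety $M'\subseteq M$ with the property that, once $W_k\subseteq Z_k$ holds for all $k$, either $\overline D$ must contain a translate of a positive-dimensional $M'$-orbit, or $M'\subseteq\St(D)$. The first possibility contradicts the general position of $\overline D$ (whose closure contains no $M$-orbit), and the second contradicts $\St(D)=\{0\}$. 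This contradiction establishes the existence of $k_0$.

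The hard part is this last step: extracting from the tower $W_k\subseteq Z_k$ a nontrivial sub-semi-abelian variety stabilizing $\overline D$. This is the technical heart of \cite{NWY}, and the point I would stress is that it is a statement about the algebraic sets $V_k$ lying over $M$ and uses the source only through Zariski density; consequently it transfers verbatim from the $\C$- and $\Delta^*$-cases, and the only item genuinely needing checking for the annulus is that algebraic non-degeneracy of $f:\A(R_0)\to M$ still forces Zariski density of $f(\A(R_0))$ and of its jet lifts, which holds by the definition of algebraic non-degeneracy.
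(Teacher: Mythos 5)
Your proposal matches the paper's treatment: the paper omits the proof entirely, stating that it follows \cite[Claim 6.1]{Q12} line by line, and your sketch correctly reconstructs that argument (negation, $M$-invariance of $\pi_2$ under the trivialization \eqref{2.7}, the stabilizing tower $W_k$ yielding a sub-semi-abelian variety contradicting $\mathrm{St}(D)=\{0\}$ and general position) while accurately identifying the one point needing verification for the annulus, namely that only Zariski density of $f(\A(R_0))$ and its jet lifts enters, not the nature of the domain. This is essentially the same approach as the paper's.
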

Since the proof of this claim is just follow the proof of \cite[Claim 6.1]{Q12} with the same lines, we will omit its proof here.

Let $\{U_\alpha\}$ be an affine open covering of $\overline M$
such that
 \begin{equation}
  \label{tri}
   L(\overline{D})|_{U_\alpha} \cong U_\alpha \times \C.
   \end{equation}
We take $\sigma \in H^{0}(\overline M,L(\overline D))$ so that $\div (\sigma )=\overline D$ and take local holomorphic functions $\sigma_\alpha
=\sigma|_{U_\alpha}$ given by the trivialization \eqref{tri}.

We fix a Hermitian metric $\| \cdot \|$ in $L(\overline D)$ and choose positive smooth functions $h_\alpha$ on $U_\alpha$ such that
$$\dfrac{1}{\|\sigma (x)\|}=\dfrac{h_\alpha(x)}{|\sigma_\alpha(x)|}, \qquad x\in U_\alpha.$$

By Claim \ref{new4.53}, there exists a polynomial $R(w )$ in variable 
$$w=(w_{lk})\in \pi_2\left(\overline {J_{k_0}(f)(\A(R_0) )}^{\text{Zar}}\right)\cong \C^{nk_0}$$
such that
\begin{align*}
\pi_2\left (J_{k_0}(\overline D,\log\partial M)\right )\cap\pi_2\left
 (\overline {J_{k_0}(f)(\A(R_0) )}^{\text{Zar}}\right )&\subset\left\{w \in\pi_2\left (\overline {J_{k_0}(f)(\A(R_0) )}^{\text{Zar}}\right )|\ R(w )=0\right\}\\
&\not=\pi_2\left (\overline {J_{k_0}(f)(\A(R_0) )}^{\text{Zar}}\right ).
\end{align*}
Then we have an equation on every $U_\alpha\times \pi_2\left (\overline {J_{k_0}(f)(\A(R_0) )}^{\text{Zar}}\right )$ of the form:
\begin{align}{\label{new4.31}}
b_{\alpha 0}\sigma_\alpha +b_{\alpha 1}d\sigma_\alpha +\cdots +b_{\alpha k_0}d^{k_0}\sigma_\alpha=R(w ),
\end{align}
where $b_{\alpha i}$ are jet differentials on $U_\alpha$.
 Therefore, in every $U_j\times \pi_2\left (\overline {J_{k_0}(f)(\A(R_0) )}^{\text{Zar}}\right )$, we have
$$\dfrac{1}{\|\sigma \|}=\dfrac{1}{|R|}
\dfrac{h_\alpha}{|\sigma_\alpha|}=\dfrac{1}{|R|}
\left| h_\alpha b_{\alpha 0}+h_\alpha b_{\alpha 1}\dfrac{d\sigma_\alpha}{\sigma_\alpha}+\cdots +h_\alpha b_{\alpha k_0}\dfrac{d^{k_0}\sigma_\alpha}{\sigma_\alpha}\right|.$$
Choose relatively compact open subsets $U'_\alpha$ of $U_\alpha$ so that
 $\cup_\alpha U'_\alpha =\overline M$. For every $\alpha$, there exist
 positive constants $C_\alpha$ such that 
 $$h_\alpha|b_{\alpha i}|\le\sum_{\text{finite}}h_\alpha |b_{\alpha
 ilk\beta_{lk}(x)}|\cdot |w_{lk}|^{\beta_{lk}}\le
 C_\alpha\sum_{\text{finite}}|w_{lk}|^{\beta_{lk}}$$
for all $x\in U'_\alpha$.
 
By making $C_\alpha$ larger if necessary, there exists $d_\alpha >0$ such that for $f(z)\in U'_\alpha$ we have
$$h_\alpha (f(z))|b_{\alpha i}(J_{k_0}(f)(z))|\le C_\alpha
\left(1+\sum_{\underset{1\le k\le k_0}{1\le l\le n}}\left| w_{lk}\circ J_{k_0}(f)(z)\right|\right)^{d_\alpha}.$$
Setting $\xi_{(k)}=(w_{1k}(J_{k}(f)),\cdots ,w_{2k}(J_{k}(f)))$ and
$\xi_{l(k)}=w_{lk}(J_{k}(f))$, for $f(z) \in U'_\alpha$ we have
$$\dfrac{1}{\|\sigma (f(z))\|}\le\dfrac{1}{|R(\xi_{(1)}(z),\cdots
,\xi_{(k_0)}(z))|}\sum_{j=1}^{N}C_\alpha \left(1+\sum_{\underset{1\le k\le
k_0}{1\le l\le n}}|
\xi_{l (k)} (z)|\right)^{d_\alpha}$$
$$\quad\quad\quad\quad\times  \left (
1+\left| \dfrac{d\sigma_\alpha}{\sigma_\alpha}(J_1(f)(z))\right| + \cdots +
\left| \dfrac{d^{k_0}\sigma_\alpha}{\sigma_\alpha}\left(J_{k_0}(f)(z)\right)\right|
 \right ).$$
This implies that
\begin{align}\nonumber
m_{0,f}(r;\overline D)&=\dfrac{1}{2\pi}\int_{0}^{2\pi}\log^{+}\dfrac{1}{\|\sigma (f(re^{i\theta})\|}d\theta +\dfrac{1}{2\pi}\int_{0}^{2\pi}\log^{+}\dfrac{1}{\|\sigma (f(\frac{1}{r}e^{i\theta})\|}d\theta +O(1)\\
\label{new4.28}
&\le m_0\left (r,\dfrac{1}{R(\xi_{(1)}(z),\cdots ,\xi_{(k_0)}(z))}\right )\\
\nonumber
&+O\biggl (\sum_{\underset{1\le k\le k_0}{1\le l\le n}}\frac{1}{2\pi}\int_{0}^{2\pi}\bigl (\log^+|\xi_{l(k)}(re^{i\theta })|+\log^+|\xi_{l(k)}(\frac{1}{r}e^{i\theta })|\bigl )d\theta\\
\nonumber
&+\sum_{\underset{1\le k\le k_0}{1\le \le n}}\frac{1}{2\pi}\int_{0}^{2\pi}\bigl (\log^+\bigl |\dfrac{d^k\sigma_\alpha}{\sigma_\alpha}(J_k(f)(re^{i\theta })\bigl |+\log^+\bigl |\dfrac{d^k\sigma_\alpha}{\sigma_\alpha}(J_k(f)+(\frac{1}{r}e^{i\theta })\bigl )\bigl |d\theta\biggl )+O(1).
\end{align}
By Lemma \ref{1.1} and by (\ref{9.5}) we have
\begin{align}\label{new4.29}
\biggl\| \ m_0\left (r,\dfrac{d^k\sigma_j\circ f}{\sigma_j\circ f}\right )=O(\log^+T_{0,f}(r;c_1(\overline D)))+O\left (\log^+\frac{1}{R_0-r}\right ).
\end{align}
Combining Lemma \ref{1.1}, (\ref{new4.28}) and (\ref{new4.29}) we obtain
\begin{align}\label{4.4}
\biggl\| \quad m_{0,f}(r;\overline D)=O(\log^+T_{0,f}(r;c_1(\overline D)))+O\left(\log\frac{1}{R_0-r}\right).
\end{align}

We next estimate the counting function $N_0(r, f^*D)$.
We see that for all $z\in\A(R_0)$,
$$\ord_zf^{*}D>k\Leftrightarrow J_k(f)(z)\in J_k(\overline D,\log\partial M).$$
Then, from (\ref{new4.31}) we have
$$\ord_zf^*D-\min\{\ord_zf^*D,k_0\}\le\ord_z\div_0(R(\xi_{(1)},\cdots ,
\xi_{(k_0)})).$$
Thus 
\begin{align}\label{new4.33}
N_0(r,f^*D) - N^{[k_0]}_0(r,f^*D)\le N_0(r,\div_0R(\xi_{(1)},\cdots ,\xi_{(k_0)})).
\end{align}
By the first main theorem and by Lemma \ref {1.1}, we have
\begin{align}
\nonumber
\|\ N_0(r,\div_0R(\xi_{(1)},\cdots ,\xi_{(k_0)}))&\le  T_0(r,R(\xi_{(1)},\cdots , \xi_{(k_0)}))+O(1)\\
\nonumber
&\le O\left(\sum_{\underset{1\le k\le k_0}{1\le l\le n}}T_0(r,\xi_{l(k)})\right )+O(1)\\
\nonumber
&=O\left(\sum_{\underset{0\le k\le k_0-1}{1\le l\le n}}m_0(r, \xi_{l(k)})\right )+O(1)\\
\label{new4.34}
&=O(\log^+T_{0,f}(r;c_1(\overline D)))+O\left(\log^+\frac{1}{R_0-r}\right).
\end{align}
By (\ref {4.4}), we have
\begin{align}\label{new4.32}
T_{0,f}(r;c_1(\overline D))\le N_0(r,f^*D) +O(1).
\end{align} 

Combining (\ref{new4.33}), (\ref{new4.34}) and (\ref{new4.32}), we obtain
$$\biggl\|\ T_{0,f}(r;c_1(\overline D))\le N^{[k_0]}(r,f^*D)+O(\log^+T_{0,f}(r;c_1(\overline D)))+O\left(\log^+\frac{1}{R_0-r}\right).$$
The theorem is proved.
\end{proof}

\end{document}